\title{Applications of F{\o}lner's condition to quantum groups}
\author{David Kyed} 
\address{David Kyed,
Mathematisches Institut,
Georg-Au\-gust-Uni\-versi\-t{\"a}t G{\"o}t\-ting\-en,
Bunsenstra{\ss}e 3-5,
D-37073 G{\"o}ttingen, 
Germany.}
\email{kyed@uni-math.gwdg.de}
\urladdr{www.uni-math.gwdg.de/kyed}
\author{Andreas Thom}
\address{Andreas Thom, Mathematisches Institut der Universit\"at Leipzig,
Johannisgasse 26, 04103 Leipzig, Germany.}
\email{thom@math.uni-leipzig.de}
\urladdr{http://www.math.uni-leipzig/MI/thom}
\thanks{The work of D.K. was funded partially by the DFG grant SCHI 525/7-1 and partially by The Danish Council for Independent Research $\mid$ Natural Sciences.}
\subjclass[2000]{16W30,43A07, 46L89}
\keywords{Quantum groups, amenability, Ore rings}
\theoremstyle{plain}
\newtheorem{thm}{Theorem}[section]
\newtheorem{cor}[thm]{Corollary}
\newtheorem{lem}[thm]{Lemma}
\newtheorem{defi}[thm]{Definition}
\newtheorem{prop}[thm]{Proposition}
\theoremstyle{definition}
\newtheorem{ex}[thm]{Example}
\newtheorem{rem}[thm]{Remark}
\newcommand{\NN}{{\mathbb N}}
\newcommand{\ZZ}{{\mathbb Z}}
\newcommand{\CC}{{\mathbb C}}
\newcommand{\MM}{{\mathbb M}}
\newcommand{\GG}{{\mathbb G}}
\renewcommand{\L}{{\mathscr L}}
\renewcommand{\max}{{\operatorname{max}}}
\newcommand{\varps}{{\varepsilon}}
\newcommand{\rg}{{\operatorname{rg\hspace{0.04cm}}}}
\newcommand{\htens}{\bar{\otimes}}
\newcommand{\tens}{\otimes}
\newcommand{\spann}{{\operatorname{span}}}
\newcommand{\To}{\longrightarrow}
\newcommand{\supp}{{\operatorname{supp}}}
\newcommand{\red}{{\operatorname{red}}}
\newcommand{\Mor}{\operatorname{Mor}}
\newcommand{\id}{\operatorname{id}}
\newcommand{\del}{{\partial}}
\newcommand{\alg}{{\operatorname{alg}}}
\newcommand{\Irred}{\operatorname{Irred}}
\newcommand{\tenrep}{\mbox{ $\mbox{\scriptsize \sf T}
\hspace{-1.77ex}\bigcirc$}} %70
\renewcommand{\int}{{\operatorname{int}}}
\newcommand{\Tr}{\operatorname{Tr}}
\newcommand{\Pol}{{\operatorname{Pol}}}
\newcommand{\sym}{{\operatorname{sym}}}
\begin{document}

\begin{abstract}
Using the F{\o}lner condition for coamenable quantum groups we derive information about the ring theoretical structure of the Hopf algebras arising from such quantum groups, as well as an approximation result concerning the Murray von Neumann dimension associated with the corresponding the von Neumann algebra.

\end{abstract}

\maketitle

\section{Preliminaries on quantum groups}\label{pre-section}
Consider a compact quantum group $\GG$ in the sense of Woronowicz \cite{wor-cp-qgrps}; i.e.~$\GG$ consists of a (not necessarily commutative) unital, separable $C^*$-algebra $C(\GG)$ together with a unital and coassociative  $*$-homomorphism 
$\Delta_\GG\colon C(\GG)\to C(\GG)\tens C(\GG)$ which furthermore has to satisfy a certain non-degeneracy condition \cite{wor-cp-qgrps}.  We remind the reader that such a $C^*$-algebraic compact quantum group automatically gives rise to a purely algebraic quantum group (i.e.~a Hopf $*$-algebra \cite{klimyk}) whose underlying algebra will be denoted $\Pol(\GG)$, as well as a von Neumann algebraic quantum group (see \cite{kustermans-vaes}) whose underlying algebra will be denoted $L^\infty(\GG)$. We also recall that the $C^*$-algebra $C(\GG)$ possesses a distinguished state $h$, called the Haar state, which plays the role corresponding to the Haar measure on a genuine compact group. The compact quantum group is said to be of Kac type if its Haar state is a trace.	Performing the GNS construction with respect to $h$ yields a Hilbert space denoted $L^2(\GG)$ on which $C(\GG)$ acts via the corresponding GNS-representation $\lambda$. The image $\lambda(C(\GG))$ will be denoted $C(\GG)_\red$ and the von Neumann algebra $L^\infty(\GG)$ is by definition the weak operator closure of $C(\GG)_\red$. We denote by $J$ the modular conjugation arising from $h$ and by $\rho\colon L^\infty(\GG)\to B(L^2(\GG)) $ the $*$-anti-homomorphism $\rho(a)=J\lambda(a)^* J$. There is also a universal representation associated with $\GG$ and we will denote the norm closure of $\Pol(\GG)$ in this representation by $C(\GG)_\max$; the reader is referred to \cite{murphy-tuset} for the details on this construction. 

\begin{ex}\	
The canonical example of a compact quantum group, upon which the general definition is modeled,  is obtained by considering a compact, second countable, Hausdorff topological group $G$ and  its $C^*$-algebra $C(G)$ of continuous, complex valued functions. The comultiplication is then the Gelfand dual of the multiplication map and the Haar state is given by integration against the Haar probability measure $\mu$. In this case the von Neumann algebra is $L^\infty(G,\mu)$ and the associated Hopf $*$-algebra is the algebra generated by matrix coefficients arising from the irreducible representations of $G$. 
\end{ex}

\begin{ex}\label{discrete-grp}
For a discrete countable group $\Gamma$ its reduced $C^*$-algebra $C^*_\red(\Gamma)$ can be turned into a compact quantum group by defining the comultiplication on a group element $\gamma \in \Gamma$ by $\Delta\gamma=\gamma\tens\gamma$. In this situation, the Haar state is the standard trace on $C^*_\red(\Gamma)$ and the Hopf-algebra and von Neumann algebra are, respectively, the complex group algebra $\CC\Gamma$ and the group von Neumann algebra $\L(\Gamma)$. 
\end{ex}

To any  quantum group $\GG$ (compact as well as just locally compact) a so-called multiplicative unitary $W$  on $L^2(\GG)\htens L^2(\GG)$ is associated; this is a unitary which (inter alia) has the property that
\begin{align*}
C(\GG)_\red =[(\id\tens \omega)W\mid \omega\in B(L^2(\GG))_*], 
\end{align*}
where, for a subset $X$ of a normed space, $[X]$ denotes the norm closure of the linear space spanned by $X$. Furthermore, a compact quantum group $\GG$ comes with  a dual quantum group $\hat{\GG}$ of so-called discrete type  whose underlying $C^*$-algebra is given by
\begin{align}\label{dual}
c_0(\hat{\GG})&:=[(\omega\tens \id)W\mid \omega\in B(L^2(\GG))_*].
\end{align}
For a detailed treatment of $C^*$-algebraic (locally compact) quantum groups and their duality theory we refer the reader to the work of Kustermans and Vaes  \cite{kustermans-vaes-C*-lc}.\\

The fundamental notions and results from the representation theory of compact groups (e.g.~irreducibility, decomposition into irreducibles, the Peter-Weyl theorem etc.) have counterparts in the (co)representation theory of compact quantum groups. We shall not elaborate further on these results but refer the reader to \cite{woronowicz} and \cite{vandaele} for more details. We will, however, need some notation concerning the corepresentations of $\GG$ which will be set up in the following.  Let $\Irred(\GG)$ denote the set of equivalence classes of irreducible corepresentations; we label this set by an auxiliary (countable) set $J$ and choose for each $\alpha\in J$ a unitary representative $u^\alpha\in C(\GG)\tens B(H_\alpha)$. Abusing notation slightly, we shall often identify $u^{\alpha}$ with the corresponding class in $\Irred(\GG)$. Moreover, we choose a fixed orthonormal basis for $H_\alpha$ and may therefore also regard $u^\alpha$ as an element in $\MM_{n_\alpha}(C(\GG))$ where $n_\alpha=\dim_\CC H_\alpha$.  We remind the reader that the free $\ZZ$-module $R(\GG)=\ZZ[\Irred(\GG)]$ becomes a fusion algebra, in the sense of  \cite{izumi}, in which the product ``$\tenrep$'' is obtained from the tensor product of corepresentations. The dimension function associated with this fusion algebra maps an irreducible, unitary corepresentation $u$ to its matrix size $n_u$ and the conjugation operator $u\mapsto \bar{u}$ is (essentially) given by taking the contragredient corepresentation. We refer to  \cite[Example 2.3]{coamenable-betti}  for a detailed description of the fusion algebra structure. Finally, we remind the reader that a compact quantum group $\GG$ is called coamenable if the counit $\varps\colon \Pol(\GG)\to\CC$ extends to a character on $C(\GG)_\red$. This notion was investigated in detail by B{\'e}dos, Murphy and Tuset in \cite{murphy-tuset}.\\

The aim of the present paper is to derive information of  ring-theoretical nature about coamenable quantum groups from the following result.

\begin{thm}[\cite{coamenable-betti}]
A compact quantum group $\GG$ is coamenable if and only if it satisfies F{\o}lner's condition; i.e.~for any $\varps>0$ and any finite, non-empty subset $S\subseteq \Irred(\GG)$ there exists a finite subset $F\subseteq \Irred(\GG)$ such that
\[
\sum_{u\in \del^{\sym}_S(F)} n_u^2<\varps\sum_{u\in F} n_u^2.
\]
\end{thm}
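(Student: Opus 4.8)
The plan is to transport both conditions to the fusion algebra $R(\GG)=\ZZ[\Irred(\GG)]$ endowed with the classical dimension function $\alpha\mapsto n_\alpha$, and then to traverse the standard circle of implications linking an invariant mean, Reiter's property, the F{\o}lner condition, and almost invariant vectors. The fact that makes $n_\alpha^2$ the right weights is that the classical dimension is multiplicative over the fusion rules: the tensor product corepresentation $u^\alpha\tenrep u^\beta$ decomposes as an \emph{orthogonal} direct sum of irreducibles with multiplicities $N_{\alpha\beta}^\gamma$, so comparing Hilbert-space dimensions gives $n_\alpha n_\beta=\sum_\gamma N_{\alpha\beta}^\gamma n_\gamma$. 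Combined with Frobenius reciprocity $N_{\beta\alpha}^\gamma=N_{\gamma\bar\alpha}^\beta$ and $n_{\bar\alpha}=n_\alpha$, this is exactly the detailed-balance identity making the measure $w_\alpha:=n_\alpha^2$ reversible for the random walk on $\Irred(\GG)$ obtained by tensoring with a \emph{symmetric} generating set $S=\bar S$; this reversibility is the source of the symmetric boundary $\del^{\sym}_S$ appearing in the statement.

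Concretely I would realize a finite set $F$ by its characteristic vector $\mathbf 1_F$ in the weighted space $\ell^2(\Irred(\GG),w)$, so that $\|\mathbf 1_F\|^2=\sum_{\alpha\in F}n_\alpha^2$, and introduce the self-adjoint Markov operator $M_S$ implementing the walk above, which satisfies $\|M_S\|\le 1$. The Kesten--F{\o}lner dictionary for such reversible walks, in its fusion-algebra formulation (cf.\ \cite{izumi}), asserts that $\|M_S\|=1$ for every finite $S$ if and only if the walk admits F{\o}lner sets with respect to $w$, i.e.\ the displayed condition holds. The bridge to the analytic notion of coamenability is the quantum Kesten criterion of B{\'e}dos--Murphy--Tuset \cite{murphy-tuset}: $\GG$ is coamenable precisely when each character $\chi_\alpha=(\id\tens\tr)(u^\alpha)$ attains its maximal norm $n_\alpha$ in $C(\GG)_\red$, equivalently when the associated symmetric convolution operators have spectral radius equal to their degree, that is $\|M_S\|=1$. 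Both sides of the theorem are thereby recast as the single assertion $\|M_S\|=1$ for all finite symmetric $S$.

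One implication is then soft. Given F{\o}lner sets $F_i$ for pairs $(S,\varps_i)$ with $\varps_i\to 0$, the normalized vectors $\xi_i=\|\mathbf 1_{F_i}\|^{-1}\mathbf 1_{F_i}$ satisfy $\|M_S\xi_i-\xi_i\|\to 0$, since $\|M_S\mathbf 1_F-\mathbf 1_F\|_{\ell^2(w)}^2$ is dominated by the $w$-mass $\sum_{\alpha\in\del^{\sym}_S(F)}n_\alpha^2$ of the boundary; hence $\|M_S\|=1$ and $\GG$ is coamenable. For the converse I would begin with coamenability, use it to produce, by restricting an invariant mean on the dual $\ell^\infty(\hat{\GG})$ to its center, an invariant mean on $\ell^\infty(\Irred(\GG))$ for the fusion convolution, and then, by a Day-type weak${}^*$-to-norm convexity argument, convert this mean into a net of positive, $w$-normalized functions $f_i\in\ell^1(\Irred(\GG),w)$ that are asymptotically invariant in $\ell^1$-norm under the walk defined by each finite $S$; this is Reiter's property.

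The hard part is the last implication, Reiter $\Rightarrow$ F{\o}lner, effected by a \emph{weighted} Namioka ``layer-cake'' argument. Writing a nonnegative $f\in\ell^1(\Irred(\GG),w)$ as $f=\int_0^\infty\mathbf 1_{\{f>t\}}\,dt$ and exploiting the linearity of both the weight $w$ and the boundary operation in the level sets $F_t=\{f>t\}$, one bounds $\int_0^\infty\big(\sum_{\alpha\in\del^{\sym}_S(F_t)}n_\alpha^2\big)\,dt$ by a constant multiple of $\|f-M_S f\|_{\ell^1(w)}$; an averaging argument over $t$ then selects a level for which $F_t$ is a genuine F{\o}lner set for the prescribed $(S,\varps)$, and a diagonal argument over a countable exhaustion of the pairs $(S,\varps)$ finishes the proof. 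The two places demanding genuine care, and where I expect the real obstacles to lie, are (a) the weighted layer-cake inequality, in which the multiplicities $N_{\alpha\beta}^\gamma$ and the weights $n_\alpha^2$ must be tracked exactly rather than by counting measure, and (b) the Kesten bridge, i.e.\ matching the operator-norm definition of coamenability on $C(\GG)_\red$ with the spectral-radius statement $\|M_S\|=1$; the former is a weighted refinement of the classical discrete-group argument, while the latter is the essential quantum-group input.
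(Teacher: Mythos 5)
This theorem is not proved in the paper at all: it is imported verbatim from \cite{coamenable-betti}, and the argument there runs along essentially the lines you propose --- the Kesten-type characterization of coamenability from \cite{murphy-tuset} (norms of characters in $C(\GG)_\red$), the invariant mean on $\ell^\infty(\hat{\GG})$ coming from \cite{tomatsu-amenable} restricted to the central subalgebra $\ell^\infty(\Irred(\GG))$, and the Hiai--Izumi circle of equivalences (mean/Reiter/F{\o}lner/Kesten) for the fusion algebra $R(\GG)$ with the classical dimension function \cite{izumi} --- so your architecture matches the published proof. The one concrete caution concerns the step you yourself flag: in the weighted Namioka layer-cake argument, a single boundary point $\alpha\in\del_S(F_t)$ is only guaranteed to leak $\ell^1(w)$-mass of order $n_\alpha/n_S$ (one summand $N_{s\alpha}^{\gamma}n_\gamma/(n_Sn_\alpha)$ times $w_\alpha=n_\alpha^2$), not of order $n_\alpha^2$, so a verbatim transcription of the group-case estimate controls $\sum_{u\in\del_S(F_t)}n_u$ rather than $\sum_{u\in\del_S(F_t)}n_u^2$; closing this gap requires the genuinely weighted formulation of the F{\o}lner/Reiter equivalence as in \cite{izumi}, not just the counting-measure argument with weights inserted at the end.
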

\noindent Here the symmetric boundary $\del^{\sym}_S(F)$ is defined as follows: The interior and the boundary of $F$ relative to $S$ are defined, respectively, as
\[
\int_S(F)=\{u\in F\mid \forall  v\in S:\supp(u\tenrep v)\subseteq F\} \quad \text{and} \quad \del_S(F)= F\setminus \int_S(F),
\]
and the symmetric boundary is then given by
\[
\del_S^\sym(F)= \del_S(F)\cup \del_S(F^c).
\]
Here $F^c$ denotes the complement in $\Irred(\GG)$ of the set $F$ and $u\tenrep v$ denotes the product in the fusion algebra $\ZZ[\Irred(\GG)]$. Note that if $F,S$ and $\varps$ are as in the F{\o}lner condition, we trivially get that
\begin{align}\label{ineq}
\sum_{u\in \del_S(F)} n_u^2<\varps\sum_{u\in F} n_u^2. 
\end{align}
Simplifying notation, we write $|F|=\sum_{u\in F}n_u^2$ for a finite subset $F\subseteq \Irred(\GG)$ and the inequality (\ref{ineq}) may therefore be written in a more compact form as
\[
|\del_S(F)|<\varps |F|.
\]
In particular, the F{\o}lner condition allows us to choose a sequence of subsets $(F_k)_{k\in \NN}$ of $\Irred(\GG)$ such that 
\[
\frac{|\int_S(F_k)|}{|F_k|}\underset{k\to\infty}{\To}1.
\]
\begin{rem}
Note that quantum groups of the form $C^*_\red(\Gamma)$ are coamenable exactly when $\Gamma$ is amenable, and in this case the quantum F{\o}lner condition identifies with the classical F{\o}lner condition for $\Gamma$. All commutative examples are automatically coamenable since the counit is given by evaluation at the identity in the corresponding compact group and therefore automatically globally defined and bounded. By results of Banica \cite{banica-subfactor}, the $q$-deformed $SU(2)$-groups $SU_q(2)$ of Woronowicz are all coamenable and so is the quantum permutation group on four points $S_4^+$ (the latter is furthermore of Kac type). Further examples of compact coamenable quantum groups of Kac type can be obtained by forming crossed products of discrete amenable groups acting on compact Kac algebras \cite{canniere} and even more general examples  of compact coamenable quantum groups can be obtained by considering the (cocycle) crossed product of a (cocycle) matched pair of a discrete amenable quantum group and a coamenable compact quantum group \cite{vaes-bicrossed-amenability, tomatsu-amenable,vaes-vainerman}. In connection with Theorem \ref{ore-thm} we note that the liberated orthogonal group $O_2^+\simeq SU_{-1}(2)$ is coamenable and of Kac type and that its associated Hopf algebra is a domain.

\end{rem}

Assume for the rest of this section that $\GG$ is of Kac type; then the discrete dual quantum group $\hat{\GG}$ is unimodular and its underlying Hopf algebra $c_c(\hat{\GG})$ is $*$-isomorphic\footnote{the isomorphism also exists in the non-Kac case.} to
\[
\bigoplus^{\alg}_{\alpha\in J}B(H_\alpha).
\]
The (bi-invariant) Haar functional $\hat{h}\colon c_c(\hat{\GG})\to \CC$ with $\hat{h}(h)=1$ is given by the simple formula
\[
(x_\alpha)_{\alpha\in J}\longmapsto\sum_{\alpha\in J}n_\alpha\Tr_{H_\alpha}(x_\alpha),
\]
where $\Tr_{H_\alpha}(-)$ is the non-normalized trace on $B(H_\alpha)$. For a finite subset $F\subseteq \Irred(\GG)$ we denote by $P_F$ the central projection in $c_c(\hat{\GG})$ given by $\sum_{u\in F} 1_{n_u}$, and we note that $\hat{h}(P_F)=|F|$.  Moreover, we denote by $W_F$ the $|F|$-dimensional subspace
\[
\spann_\CC\{u_{ij}\mid u\in F\}\subseteq \Pol(\GG).
\]
The algebra $c_c(\hat{\GG})$ has a natural representation $L\colon c_c(\hat{\GG})\to B(L^2(\GG))$ and the functional $\hat{h}$ gives rise to a normal, semi-finite, faithful Haar weight on the enveloping von Neumann algebra $\ell^\infty(\hat{\GG}):=L(c_c(\hat{\GG}))''$ turning it into a Kac algebra of discrete type. In this representation, the projection $P_E$ projects onto the finite dimensional subspace $W_{\bar{E}}$ where $\bar{u}\in \Irred(\GG)$ denotes the conjugate of $u$. Clearly the functional $h$ gives rise to a faithful (tracial) state on the enveloping von Neumann algebra $L^\infty(\GG)=\lambda(\Pol(\GG))''$ and the two Haar functionals are linked via the following simple relation.

\begin{prop}[\cite{vaes-van-daele-heisenberg}]\label{trace-formula}
For any $a\in {L^\infty(\GG)}$ and any $x\in \ell^\infty(\hat{\GG})$ with $\hat{h}(x^*x)<\infty$ we have
$
\Tr(a^*x^*x a)=h(a^*a)\hat{h}(x^*x).
$ 
\end{prop}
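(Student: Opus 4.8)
The plan is to isolate the two factors $h(a^*a)$ and $\hat h(x^*x)$ by first recognising that, on the discrete dual, the operator trace \emph{is} the Haar weight. On each factor $B(H_\alpha)$ of $\ell^\infty(\hat\GG)$ both $\Tr$ (the non-normalised operator trace on $B(L^2(\GG))$) and $\hat h$ restrict to positive traces, hence to scalar multiples of $\Tr_{H_\alpha}$; since they agree on the minimal central projection $P_{\{\alpha\}}$---both give the value $n_\alpha^2=\dim W_{\bar\alpha}$, using that $P_{\{\alpha\}}$ projects onto $W_{\bar\alpha}$ and that $\hat h(P_{\{\alpha\}})=n_\alpha\Tr_{H_\alpha}(1)$---they coincide, so $\Tr=\hat h$ on $\ell^\infty(\hat\GG)$. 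In particular the hypothesis $\hat h(x^*x)<\infty$ says exactly that $x$ is a Hilbert--Schmidt operator on $L^2(\GG)$, and $T:=x^*x$ is a positive trace-class element of $\ell^\infty(\hat\GG)$ with $\Tr(T)=\hat h(T)$.

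Next I would peel off $a$. Since $T$ is trace-class and $a$ bounded, cyclicity of the trace gives $\Tr(a^*x^*xa)=\Tr(a^*Ta)=\Tr(T\lambda(aa^*))$, and because $\GG$ is of Kac type the Haar state is a trace, so $h(a^*a)=h(aa^*)$. Thus the proposition reduces to the scalar identity $\Tr(T\lambda(b))=\hat h(T)\,h(b)$ for every $b\in L^\infty(\GG)$; equivalently, the normal positive functional $\phi_T:=\Tr(T\lambda(-))$ on $L^\infty(\GG)$ must equal $\hat h(T)\,h$. Evaluating at $b=1$ already pins down the constant, $\phi_T(1)=\Tr(T)=\hat h(T)$, so everything comes down to showing that $\phi_T$ is proportional to the Haar state.

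To see this I would prove that $\phi_T$ is invariant and invoke uniqueness of the Haar state: if $(\phi_T\tens\id)\Delta(b)=\phi_T(b)1$ for all $b$, then $\phi_T/\phi_T(1)$ is an invariant state and hence equals $h$, giving $\phi_T=\hat h(T)\,h$. Establishing this invariance is the main obstacle, and it is the only point where the two algebras $\lambda(L^\infty(\GG))$ and $\ell^\infty(\hat\GG)$---which do \emph{not} commute on $L^2(\GG)$---genuinely interact: one has to use that the coproduct is implemented by the multiplicative unitary $W$ while $\ell^\infty(\hat\GG)$ is generated by the complementary leg of $W$ (cf.\ the formulas for $C(\GG)_\red$ and $c_0(\hat\GG)$ above), so that a direct computation with $W$ yields the invariance. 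A more hands-on alternative is to test $\phi_T$ on the matrix coefficients $u_{ij}$ of a non-trivial irreducible in the Peter--Weyl basis and show the result vanishes; but the required cancellation again encodes the invariance of the block traces $\Tr_{H_\alpha}(T_\alpha\,\cdot\,)$ rather than term-by-term orthogonality, so it is no shorter.
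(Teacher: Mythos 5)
First, a remark on the comparison itself: the paper does not prove this proposition at all --- it is quoted from \cite{vaes-van-daele-heisenberg} and used as a black box --- so your attempt can only be judged on its own terms. The first half of your argument is correct and genuinely useful. The identification $\Tr\circ L=\hat{h}$ on $\ell^\infty(\hat{\GG})_+$ is right: $L$ restricted to the simple block $B(H_\alpha)$ is a multiple of the identity representation, so $\Tr\circ L$ is a positive trace there, hence proportional to $\Tr_{H_\alpha}$, and both functionals give $n_\alpha^2$ on the unit of the block because $P_{\{\alpha\}}$ projects onto the $n_\alpha^2$-dimensional space $W_{\bar{\alpha}}$; this is exactly where unimodularity of $\hat{\GG}$ (i.e.\ the Kac assumption) enters. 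Likewise the reduction via cyclicity, $\Tr(a^*Ta)=\Tr(T\,aa^*)$ for $T=x^*x$ trace class, together with traciality of $h$, correctly reduces the proposition to the single-variable identity $\Tr(T\lambda(b))=\hat{h}(T)\,h(b)$, i.e.\ to showing that $\phi_T:=\Tr(T\lambda(-))$ is an invariant normal positive functional on $L^\infty(\GG)$, after which uniqueness of the Haar state and the normalization $\phi_T(1)=\hat{h}(T)$ finish the job.

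The problem is that you then stop. The invariance $(\phi_T\tens\id)\Delta(b)=\phi_T(b)1$ is, as you yourself say, ``the main obstacle'', and ``a direct computation with $W$ yields the invariance'' is an assertion, not a proof: that computation requires the pentagon relation and the precise interplay between the two legs of $W$ generating $L^\infty(\GG)$ and $\ell^\infty(\hat{\GG})$, and it is precisely the commuting-square statement of Vaes--Van Daele that the proposition encodes. The Peter--Weyl alternative you mention amounts to proving $\Tr\bigl(L(T)\lambda(u^\beta_{ij})\bigr)=0$ for every nontrivial $\beta$, and, as you correctly observe, the individual diagonal entries do not vanish --- only the sum over the block does --- so this cancellation is again the entire content of the result. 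Everything you have written before this point is (correct) bookkeeping that isolates the key identity; the identity itself is never established. As it stands the proposal is an outline with the decisive step missing.
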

\noindent Here $\Tr(-)$ denotes the ordinary trace on $B(L^2(\GG))$.

\section{The relative dimension function}\label{reldim-section}
Consider a compact quantum group of Kac type and a finite subset $F\subseteq \Irred(\GG)$ and denote by $P_F^n\in c_c(\hat{\GG})^n$ the diagonal amplification of the central projection $P_F\in c_c(\hat{\GG})$; i.e.~$P_F^n$ is the projection onto the finite dimensional subspace
\[
W_{\bar{F}}^n=\spann_\CC\{ u_{ij}\mid u\in \bar{F}\}^n\subseteq L^2(\GG)^n.
\]
For a closed subspace $K\subseteq L^2(\GG)^n$ we denote by $Q_K$ the orthogonal projection onto $K$ and define the dimension of $K$ relative to $F$ as
\[
\dim_F(K)=|F|^{-1}\Tr_{n}(Q_K P_{\bar{F}}^n),
\]
where $\Tr_n(-)$ denotes the trace on $B(L^2(\GG)^n)$. Here, and in what follows, we suppress the representations $\lambda$ and $L$ for the sake of notational convenience.
\begin{prop}\label{dim-prop}
The relative dimension function $\dim_F(-)$ has the following properties:
\begin{itemize}
\item[(i)] The number $\dim_F(K)$ is non-negative and finite.
\item[(ii)] If $K_1\subseteq K_2$ then $\dim_F(K_1)\leq \dim_F(K_2)$.
\item[(iii)] If  $K$ is ${L^\infty(\GG)}$-invariant then $\dim_F(K)=\dim_{L^\infty(\GG)}(K):=h(Q_K)$; the Murray-von Neumann dimension of $K$. 
\item[(iv)] If $K\subseteq \spann_\CC\{u_{ij}\mid {u}\in F\}^n$ then $\dim_F(K)=|F|^{-1}\dim_\CC(K)$.
\end{itemize}
\end{prop}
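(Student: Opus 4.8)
The plan is to dispose of (i), (ii) and (iv) quickly and to concentrate the real work on (iii). Throughout write $M=\lambda(L^\infty(\GG))$, regarded as acting diagonally on $L^2(\GG)^n$, and note that $P_{\bar F}^n$ is a projection of finite rank $n|F|$. For (i) and (ii) I would use cyclicity of the trace to write $\Tr_n(Q_KP_{\bar F}^n)=\Tr_n(P_{\bar F}^nQ_KP_{\bar F}^n)$; the operator on the right is positive and of finite rank, giving non-negativity and trace-class-ness, while $Q_K\le 1$ forces $P_{\bar F}^nQ_KP_{\bar F}^n\le P_{\bar F}^n$ and hence $0\le\Tr_n(Q_KP_{\bar F}^n)\le\Tr_n(P_{\bar F}^n)=n|F|<\infty$, which is (i). For (ii), $K_1\subseteq K_2$ gives $Q_{K_1}\le Q_{K_2}$, so $P_{\bar F}^nQ_{K_1}P_{\bar F}^n\le P_{\bar F}^nQ_{K_2}P_{\bar F}^n$, and monotonicity of the trace on positive operators yields the inequality. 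For (iv) the hypothesis is exactly that $K$ lies in the range of $P_{\bar F}^n$, whence $Q_K\le P_{\bar F}^n$, so $Q_KP_{\bar F}^n=Q_K$ and $\Tr_n(Q_K)=\dim_\CC(K)$.

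For (iii) the invariance of $K$ means precisely that $Q_K$ commutes with the diagonal action of $M$, i.e. $Q_K\in M_n(M')$, and on $L^2(\GG)=L^2(M,h)$ one has $M'=\rho(L^\infty(\GG))$ with cyclic trace vector $\Omega=\hat 1$. I would first recall the standard-form description of the Murray--von Neumann dimension: the canonical trace $\tau'$ on $M'$ is $\tau'(y)=\langle\Omega,y\Omega\rangle$, so that
\[
h(Q_K)=(\Tr_n\otimes\tau')(Q_K)=\sum_{i=1}^n\langle\Omega_i,Q_K\Omega_i\rangle,
\]
where $\Omega_i\in L^2(\GG)^n$ denotes $\Omega$ placed in the $i$-th coordinate. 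Thus the whole of (iii) reduces to the single-copy identity
\[
\Tr(yP_{\bar F})=|F|\,\langle\Omega,y\Omega\rangle\qquad(y\in M'),
\]
because applying it to the diagonal blocks $y=(Q_K)_{ii}\in M'$ and summing over $i$ gives $\Tr_n(Q_KP_{\bar F}^n)=|F|\sum_i\langle\Omega_i,Q_K\Omega_i\rangle=|F|\,h(Q_K)$, which is $\dim_F(K)=h(Q_K)$ after dividing by $|F|$.

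To prove the single-copy identity I would expand the trace in the orthonormal basis of $L^2(\GG)$ furnished by the orthogonality relations for the tracial Haar state: the vectors $\sqrt{n_u}\,u_{ij}$ are orthonormal, and $P_{\bar F}$ is the orthogonal projection onto $W_F=\spann_\CC\{u_{ij}\mid u\in F\}$, so
\[
\Tr(yP_{\bar F})=\sum_{u\in F}\sum_{i,j}n_u\,\langle u_{ij},y\,u_{ij}\rangle.
\]
Using cyclicity to write $u_{ij}=\lambda(u_{ij})\Omega$ and the commutation $[y,\lambda(u_{ij})]=0$, each matrix element becomes $\langle u_{ij},y\,u_{ij}\rangle=\langle\Omega,y\,\lambda(u_{ij}^*u_{ij})\Omega\rangle$. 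Summing over $i,j$ and feeding in unitarity of the corepresentation in the form $\sum_{i,j}u_{ij}^*u_{ij}=n_u\,1$ collapses each irreducible block to $n_u\,\langle\Omega,y\Omega\rangle$; multiplying by the remaining factor $n_u$ and summing over $u\in F$ produces $\sum_{u\in F}n_u^2\,\langle\Omega,y\Omega\rangle=|F|\,\langle\Omega,y\Omega\rangle$, as wanted. This single-copy identity is the commutant counterpart of Proposition \ref{trace-formula}.

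The step I expect to require the most care is not any one computation but the bookkeeping of normalizations that makes $h(Q_K)$ literally equal to the Murray--von Neumann dimension: one must use the unnormalized matrix trace $\Tr_n$, normalize $\tau'$ by $\tau'(1)=1$ (equivalently $\tau'(y)=\langle\Omega,y\Omega\rangle$), and keep track of $\|u_{ij}\|^2=n_u^{-1}$ with the correct exponent, since it is exactly these constants that convert $\sum_{u\in F}n_u^2$ into $|F|$. It is worth stressing that the single-copy identity is linear in $y$ and never uses that $y$ is a projection, so passing from $n=1$ to general $n$ via the diagonal blocks $(Q_K)_{ii}$---which are self-adjoint but not projections---causes no difficulty.
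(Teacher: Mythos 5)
Your proof is correct. Parts (i), (ii) and (iv) are handled just as in the paper: traciality and positivity of $\Tr_n$ for the first two, and for (iv) the observation that the hypothesis puts $K$ inside $W_F^n=\rg(P_{\bar F}^n)$ (indeed $P_{\bar F}$ projects onto $W_{\bar{\bar F}}=W_F$), so $Q_KP_{\bar F}^n=Q_K$. The only genuine difference is in (iii). The paper deduces it by citing Proposition~\ref{trace-formula} together with unimodularity to obtain $\Tr_n(T^*P_F^nT)=h_n(T^*T)\hat h(P_F)$ for $T\in\MM_n(L^\infty(\GG)')$ and then taking $T=Q_K$; you instead prove the required special case from scratch --- the ``single-copy'' identity $\Tr(yP_{\bar F})=|F|\langle\Omega,y\Omega\rangle$ for $y\in L^\infty(\GG)'$ --- by expanding the trace in the orthonormal basis $\{\sqrt{n_u}\,u_{ij}\}$, commuting $y$ past $\lambda(u_{ij})$, and using $\sum_{i,j}u_{ij}^*u_{ij}=n_u1$. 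This is in effect a self-contained proof of the relevant instance of the Vaes--Van Daele trace formula (restricted to $x=P_{\bar F}$ and the commutant side), whereas the paper's citation covers general $x$ with $\hat h(x^*x)<\infty$ and both $\MM_n(L^\infty(\GG))$ and $\MM_n(L^\infty(\GG)')$ simultaneously; what your version buys is transparency about exactly where the Kac/trace hypothesis and the normalizations $\|u_{ij}\|^2=n_u^{-1}$ enter. Your reduction of the $n$-fold case to the diagonal blocks $(Q_K)_{ii}\in L^\infty(\GG)'$, and the remark that the identity is linear in $y$ so these blocks need not be projections, are both correct.
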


\begin{proof}
Properties (i) and (ii) follows from traciality and positivity of the standard trace $\Tr_{n}(-)$, and (iv) is seen through a straight forward calculation using the orthonormal basis $\{\sqrt{n_\alpha}u_{ij}^{\alpha}\mid \alpha\in I, 1\leq i,j\leq n_\alpha \}$ for $L^2(\GG)$. Using the unimodularity of $\hat{\GG}$ and the trace formula in Proposition \ref{trace-formula} one sees that for any matrix $T$ in either $\MM_n(L^\infty(\GG))$ or $\MM_n(L^\infty(\GG)')$ we get
\[
\Tr_{n}(T^*P_F^n T)=h_n(T^*T)\hat{h}(P_F),
\]
where $h_n\colon \MM_n(B(L^2(\GG)))\to \CC$ is given by $h_n((T_{ij})_{i,j=1}^n)=\sum_{i=1}^n h(T_{ii})$. From this formula (iii) follows. See \cite{coamenable-betti} Lemma 5.1 for more details. 
\end{proof}

\begin{rem}
The relative dimension function above is a quantum analogue of a construction considered for groups by Eckmann in \cite{eckmann} and Elek in \cite{elek-zdc}. 
\end{rem}

\section{Zero-divisors in quantum groups}
In \cite{elek-zdc} Elek proves that for amenable torsion free groups the zero-divisor conjecture of Kaplansky is equivalent to Linnell's analytic zero-divisor conjecture. The aim of this section is to boost Elek's argument to obtain the following result:

\begin{thm}\label{zd-thm}
If $\GG$ is coamenable and  $a\in \Pol(\GG)$ is not a left zero-divisor then $\lambda(a)$ acts with trivial kernel on $L^2(\GG)$. Similarly, if $a$ is not a right zero-divisor in $\Pol(\GG)$ then $\rho(a)$ has trivial kernel.
\end{thm}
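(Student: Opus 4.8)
The plan is to adapt Elek's dimension-counting argument, replacing the counting measure on a group by the relative dimension function of Section~\ref{reldim-section} and the quantum F{\o}lner sets. I treat the left zero-divisor statement in detail; the right-handed assertion then follows by the symmetric argument applied to $\rho$.

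First I would reduce the claim to a statement about traces. Writing $Q$ for the orthogonal projection onto $\ker\lambda(a)$, I note that $Q$ is a spectral projection of $\lambda(a)^*\lambda(a)$ and hence lies in the finite von Neumann algebra $L^\infty(\GG)$, on which $h$ restricts to a faithful trace. Thus $\ker\lambda(a)=0$ if and only if $h(Q)=0$. Letting $R$ denote the projection onto $\overline{\Im\lambda(a)}$, the polar decomposition of $\lambda(a)$ inside $L^\infty(\GG)$ together with the traciality of $h$ gives $h(Q)=1-h(R)$, so it suffices to prove $h(R)=1$.

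Next I would feed in the ring-theoretic hypothesis through a finite-dimensional injectivity statement. Let $S\subseteq\Irred(\GG)$ be the (finite) support of $a$, enlarged if necessary so that left multiplication by $a$ carries $W_{\int_S(F)}$ into $W_F$ for every finite $F\subseteq\Irred(\GG)$; since $a$ is not a left zero-divisor, the map $b\mapsto ab$ is injective on $\Pol(\GG)$, and in particular on $W_{\int_S(F)}$, so that $\lambda(a)(W_{\int_S(F)})$ is a subspace of $W_F$ of dimension exactly $|\int_S(F)|$. Because $R\in L^\infty(\GG)$, the subspace $\overline{\Im\lambda(a)}$ is invariant, and Proposition~\ref{dim-prop}(iii) yields $h(R)=\dim_F(\overline{\Im\lambda(a)})=|F|^{-1}\Tr(P_{\bar F}RP_{\bar F})$. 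Estimating this trace from below by the dimension of any subspace of $\overline{\Im\lambda(a)}\cap W_F$ contained in the range of $R$ — in particular by $\dim_\CC\lambda(a)(W_{\int_S(F)})=|\int_S(F)|$ — I obtain $h(R)\geq|\int_S(F)|/|F|$ for every finite $F$. Running this inequality over a F{\o}lner sequence $(F_k)_{k\in\NN}$ for the fixed set $S$, the right-hand side tends to $1$, forcing $h(R)=1$ and hence $h(Q)=0$.

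The step I expect to be the main obstacle is the combinatorial compatibility needed to guarantee $\lambda(a)W_{\int_S(F)}\subseteq W_F$: one has $W_u\cdot W_v\subseteq W_{\supp(u\tenrep v)}$, while the interior is defined using $\supp(u\tenrep v)$ in a fixed order, and the fusion algebra $\ZZ[\Irred(\GG)]$ is non-commutative, so the two orderings must be matched by enlarging $S$ to be closed under conjugation and invoking Frobenius reciprocity for the fusion rules. Everything else is a routine linear-algebra count together with the faithfulness and traciality of $h$. The right zero-divisor assertion is then obtained verbatim by replacing $\lambda$, left multiplication and $L^\infty(\GG)$ by $\rho$, right multiplication and $L^\infty(\GG)'$, using the commutant version of the trace formula established in the proof of Proposition~\ref{dim-prop}.
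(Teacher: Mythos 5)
Your overall strategy is the paper's: an Elek-style F{\o}lner counting argument built on the relative dimension function of Section~\ref{reldim-section}. The arrangement is slightly different --- you estimate $\dim_F(\overline{\Im\lambda(a)})$ from below by $|F|^{-1}\dim_\CC\lambda(a)(W_{\int_S(F)})$ and let the F{\o}lner sequence force $h(R)=1$, whereas the paper proves the two-sided approximation $\dim_{F_k}\ker(\rho(a)_k)\to\dim_{L^\infty(\GG)}\ker(\rho(a))$ and argues by contradiction --- but modulo one point this count is correct, and your use of the commutant version of the trace formula is legitimate since the proof of Proposition~\ref{dim-prop} establishes it for $T$ in either $\MM_n(L^\infty(\GG))$ or $\MM_n(L^\infty(\GG)')$.

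The step you yourself flag as the main obstacle is, however, a genuine gap, and the fix you propose does not work. The interior $\int_S(F)$ is defined by the condition $\supp(u\tenrep v)\subseteq F$ for all $v\in S$, i.e.\ with the support of $a$ acting on the \emph{right} in the fusion algebra, whereas $\lambda(a)$ is \emph{left} multiplication, so the inclusion $\lambda(a)W_{\int_S(F)}\subseteq W_F$ requires control of $\supp(s\tenrep u)$. Frobenius reciprocity converts $N_{su}^{w}$ into $N_{\bar{s}w}^{u}$ or $N_{w\bar{u}}^{s}$, never into $N_{ut}^{w}$ for some $t$ manufactured from $s$ and conjugation; it cannot turn left fusion products into right ones. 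Already in Example~\ref{discrete-grp} this breaks: for a nonabelian amenable group the conditions $\gamma S\subseteq F$ and $S\gamma\subseteq F$ are genuinely different even when $S=S^{-1}$ (in the infinite dihedral group take $S=\{t\}$, $F=\{r,rt\}$, $\gamma=r$), so no finite enlargement of $S$ guarantees your inclusion, and without it the trace estimate $\Tr(RP_{\bar F})\geq\dim_\CC\lambda(a)(W_{\int_S(F)})$ collapses. The correct repair is precisely the paper's opening move ``by symmetry'': since $\rho(a^*)=J\lambda(a)J$ one has $\ker\lambda(a)=J\ker\rho(a^*)$, and $a$ is a left zero-divisor if and only if $a^*$ is a right zero-divisor, so it suffices to treat right multiplication, where $\rho(a)$ maps $W_{\int_S(F)}$ into $W_F$ by the very definition of the interior. (Equivalently, for the left-handed case apply the F{\o}lner condition to $\bar{S}$ and use the conjugated sets, since $\supp(s\tenrep u)\subseteq F$ if and only if $\supp(\bar{u}\tenrep\bar{s})\subseteq\bar{F}$ and $|\bar{F}|=|F|$.) With that substitution the rest of your argument goes through.
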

As in the previous section, $\GG$ denotes here a compact quantum group of Kac type. We note that when applied to quantum groups of the form $C^*_\red(\Gamma)$, with $\Gamma$ discrete and amenable, Theorem \ref{zd-thm} identifies with Elek's original result. The contents of Theorem \ref{zd-thm} can also be derived from the proof of \cite[Theorem 6.1]{coamenable-betti}, but we give here the following shorter and more illuminating proof.

\begin{proof}
By symmetry, it suffices to treat the case where $a$ is not a right zero-divisor.
Since $\{u_{ij}^\alpha \mid \alpha\in J\}$ constitutes a linear basis for the space $\Pol(\GG)$ the element $a\in \Pol(\GG)$ has a unique linear expansion $a=\sum_{i,j,\alpha}t_{ij}^\alpha u_{ij}^\alpha$ and we may therefore consider its \emph{support} which is defined as
\[
S=\supp(a)=\{u^\alpha \in \Irred(\GG)\mid  \exists \  i,j\in\{ 1,\dots, n_\alpha \} : t_{ij}^\alpha\neq 0 \}. 
\]
Since $\GG$ is assumed to be coamenable, we can choose (see Section \ref{reldim-section}) a sequence of finite sets $F_k\subseteq \Irred(\GG)$ such that
\[
\frac{|\int_SF_k|}{|F_k|}\underset{k\to\infty}{\To}1.
\]
Denote $\int_S(F_k)$ by $G_k$ for simplicity. Because any product of matrix coefficients $u_{ij}v_{kl}$ is contained in the linear span of the matrix coefficients of the tensor product $u\tenrep v$, we see that $\rho(a)$ restricts to an operator
\[
\rho(a)_k\colon W_{G_k}\To W_{F_k}.
\]
We now prove that
\begin{align*}
\dim_{F_k}(\ker(\rho(a)_k))\underset{k\to\infty}{\To} \dim_{L^\infty(\GG)}(\ker(\rho(a))).\tag{$\dagger$}
\end{align*}
\begin{proof}[Proof of $(\dagger)$]
Since $\rho(a)\in {L^\infty(\GG)'}$ both $\ker(\rho(a))$ and $\overline{\rg(\rho(a))}$ are closed ${L^\infty(\GG)}$-invariant subspaces. Using Proposition \ref{dim-prop}  we therefore get
\begin{align}\label{eq1}
|F_k|^{-1}\dim_\CC(\ker(\rho(a)_k))&=\dim_{F_k}(\ker(\rho(a)_k))\notag\\
&\leq \dim_{F_k}(\ker(\rho(a)))\notag\\
&=\dim_{L^\infty(\GG)}(\ker(\rho(a)));
\end{align}
and 
\begin{align}\label{eq2}
|{F_k}|^{-1}\dim_\CC(\rg(\rho(a)_k))&=\dim_{F_k}(\rg(\rho(a)_k))\notag\\
&\leq \dim_{F_k}(\overline{\rg(\rho(a))})\notag\\
&=\dim_{L^\infty(\GG)}(\overline{\rg(\rho(a))}).
\end{align}
Since the Murray-von Neumann dimension is additive, the inequalities (\ref{eq1}) and (\ref{eq2}) yield
\begin{align*}
1&=\dim_{L^\infty(\GG)}(L^2(\GG))\\
&=\dim_{L^\infty(\GG)}(\ker(\rho(a)))+\dim_{L^\infty(\GG)}(\overline{\rg(\rho(a))})\\
&\geq \dim_{F_k}(\ker(\rho(a)_k))+\dim_{F_k}(\rg(\rho(a)_k))\\
&\geq |F_k|^{-1}\dim_\CC(\ker(\rho(a)_k))+ |F_k|^{-1}\dim_\CC(\rg(\rho(a)_k))\\
&=|F_k|^{-1}\dim_{\CC}W_{G_k}\\
&=|F_k|^{-1}|G_k| .
\end{align*}
Since $|F_k|^{-1}|G_k|\underset{k\to \infty}{\To} 1$ this forces
\begin{align*}
\lim_{k\to\infty}\dim_{F_k}(\ker(\rho(a))_k)&=\dim_{L^\infty(\GG)}(\ker(\rho(a))); \\
\lim_{k\to\infty}\dim_{F_k}(\rg(\rho(a))_k) &=\dim_{L^\infty(\GG)}(\overline{\rg(\rho(a))}),
\end{align*}
as desired.
\end{proof}
\noindent Theorem \ref{zd-thm} now follows easily: If $\ker(\rho(a))$ is non-trivial then $\dim_{L^\infty(\GG)}(\ker(\rho(a)))>0$ and by $(\dagger)$ there must exist a $k\in \NN$ such that $\dim_{F_k}\ker(\rho(a)_k)>0$. This can only happen if $\ker(\rho(a)_k)$ is non-trivial and since $\ker(\rho(a)_k)\subseteq W_{G_k}\subseteq \Pol(\GG)$ this proves the claim.
\end{proof}
In particular, Theorem \ref{zd-thm} implies the following stability of regularity:
\begin{cor}\label{regular-cor}
If $a\in \Pol(\GG)$ is a regular element\footnote{i.e.~not a zero-divisor from either side.} then it stays regular in the over-ring $L^\infty(\GG)$
\end{cor}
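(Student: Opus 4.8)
The plan is to deduce Corollary \ref{regular-cor} directly from Theorem \ref{zd-thm}, which is exactly the statement that the two notions of regularity coincide once the element is pushed into the larger von Neumann algebra. First I would recall what ``regular in $L^\infty(\GG)$'' means: the element $\lambda(a)$ must not annihilate any nonzero vector from either side, i.e.~it must have trivial kernel as an operator on $L^2(\GG)$ and, simultaneously, its adjoint-type companion $\rho(a)$ must also have trivial kernel. The key observation is that left (respectively right) multiplication by $\lambda(a)$ on $L^\infty(\GG)$ is injective precisely when $\lambda(a)$ has trivial kernel on the GNS space $L^2(\GG)$, because $L^\infty(\GG)$ sits faithfully inside $B(L^2(\GG))$ and the trace vector is separating.

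The core of the argument is then immediate. Suppose $a\in\Pol(\GG)$ is regular, meaning it is neither a left nor a right zero-divisor in the Hopf algebra $\Pol(\GG)$. Applying Theorem \ref{zd-thm} to the hypothesis that $a$ is not a left zero-divisor yields that $\lambda(a)$ acts with trivial kernel on $L^2(\GG)$; applying it to the hypothesis that $a$ is not a right zero-divisor yields that $\rho(a)$ has trivial kernel as well. I would then translate these two kernel statements back into the statement that $\lambda(a)$ is neither a left nor a right zero-divisor in $L^\infty(\GG)$, using faithfulness of the trace $h$ and the fact that the antihomomorphism $\rho$ implements right multiplication on the standard form, so that $\rho(a)$ having trivial kernel rules out right zero-divisors for $\lambda(a)$ in the von Neumann algebra.

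I do not expect a genuine obstacle here, as the corollary is essentially a repackaging of Theorem \ref{zd-thm}. The one point that deserves care is the precise identification between the purely ring-theoretic condition (no zero-divisor in $L^\infty(\GG)$) and the operator-theoretic condition (trivial kernel of $\lambda(a)$ and of $\rho(a)$ on $L^2(\GG)$). This rests on the standard-form picture of the tracial von Neumann algebra $L^\infty(\GG)$ acting on $L^2(\GG)$, where $L^\infty(\GG)'=\rho(L^\infty(\GG))''=JL^\infty(\GG)J$, so that left multipliers and right multipliers are faithfully represented and their injectivity is detected on the dense subspace $\Pol(\GG)\subseteq L^2(\GG)$. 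Making this identification explicit is the only nontrivial step, and it is entirely formal. Consequently I would present the proof as a short deduction, invoking Theorem \ref{zd-thm} twice and noting that the containment $\Pol(\GG)\subseteq L^\infty(\GG)$ transports regularity upward.
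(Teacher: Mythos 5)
Your proposal is correct and takes essentially the same route as the paper: the paper's entire proof is the one-line observation that the GNS construction embeds $L^\infty(\GG)$ into $L^2(\GG)$, so that Theorem \ref{zd-thm}, applied once to each side, transports regularity upward. Your longer discussion of the standard form and the separating trace vector simply spells out what that one line encapsulates.
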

\begin{proof}
This follows from the fact that the GNS-construction provides an embedding of $L^\infty(\GG)$ into $L^2(\GG)$.
\end{proof}

Our next application regards a non-commutative analogue of the construction of the field of fractions associated with a (commutative) integral domain --- the so-called Ore localization. We first quickly review the basics on this construction and refer the reader to \cite{lam-lectures} for a thorough treatment of the subject.
\begin{defi}
Let $R$ be a ring and let $Z\subseteq R$ be a multiplicative subset containing no zero-divisors. Then $R$ is said to satisfy the (right) Ore condition with respect to $Z$ if for all $a\in R$ and $s\in Z$ there exist $b\in R$ and $t\in S$ such that $at=sb$.
\end{defi}
If $R$ satisfies the Ore condition with respect to $Z$ then there exists a ring $RZ^{-1}$ and a ring homomorphism $\iota\colon R\to RZ^{-1}$ such that  $\iota(s)$ is invertible for any $s\in Z$, each element in $RZ^{-1}$ can be written as $\iota(r)\iota(s)^{-1}$ for $s\in Z$ and $r\in R$  and which furthermore is universal in the following sense: For any ring $S$ and any homomorphism $\varphi\colon R\to S$ such that $\varphi(Z)\subseteq S^{\times}$ there exists $\Phi\colon RZ^{-1}\to S$ such that $\Phi\circ \iota=\varphi$. In particular, if $R$ is a domain and $Z=R\setminus\{0\}$, then $\iota$ is injective and $RZ^{-1}$ is a skew field.

\begin{thm}\label{ore-thm}
If $\GG$ is a coamenable, compact quantum group of Kac type and $\Pol(\GG)$ is a domain then $\Pol(\GG)$ satisfies the (right) Ore condition with respect to the set $Z$ of all non-zero elements. Moreover, the skew field $\Pol(\GG)Z^{-1}$  embeds into the ring of  operators affiliated with $L^\infty(\GG)$.
\end{thm}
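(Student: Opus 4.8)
The plan is to split the statement into two independent parts: first establish the right Ore condition by a F\o lner--dimension argument, and then produce the embedding using the universal property of the Ore localization together with Theorem \ref{zd-thm}. For the Ore condition I would use the standard reformulation for domains, namely that $\Pol(\GG)$ satisfies the right Ore condition with respect to $Z$ if and only if $a\Pol(\GG)\cap s\Pol(\GG)\neq\{0\}$ for all nonzero $a,s\in\Pol(\GG)$ (if $at=sb$ with $t\neq 0$ then $at\neq 0$ lies in the intersection, and conversely). So suppose, towards a contradiction, that there are nonzero $a,s$ with $a\Pol(\GG)\cap s\Pol(\GG)=\{0\}$, i.e.\ the sum $a\Pol(\GG)+s\Pol(\GG)$ is direct. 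Set $S=\supp(a)\cup\supp(s)$ and, for a finite $F\subseteq\Irred(\GG)$, let $G=\{u\in F\mid \forall v\in S:\supp(v\tenrep u)\subseteq F\}$ be the ``left interior'' of $F$ relative to $S$. Since any product $v_{kl}u_{ij}$ of matrix coefficients lies in $W_{\supp(v\tenrep u)}$, left multiplication by $a$ and by $s$ maps $W_G$ into $W_F$, so that $(x,y)\mapsto ax+sy$ defines a linear map $\Phi\colon W_G\oplus W_G\to W_F$.

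The heart of the argument is then a dimension count. Because $\Pol(\GG)$ is a domain and the sum is direct, $\Phi$ is injective: if $ax+sy=0$ then $ax=-sy\in a\Pol(\GG)\cap s\Pol(\GG)=\{0\}$, whence $ax=sy=0$ and therefore $x=y=0$. Injectivity forces $2|G|=\dim_\CC(W_G\oplus W_G)\leq\dim_\CC W_F=|F|$. On the other hand, the F\o lner condition --- applied to the conjugate set $\bar S$ and transported back via the bijection $u\mapsto\bar u$, which fixes each $n_u$ and turns the left interior above into an ordinary interior of $\bar F$ relative to $\bar S$ --- provides finite sets $F_k$ with $|G_k|/|F_k|\to 1$. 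For large $k$ this contradicts $2|G_k|\leq|F_k|$, and the Ore condition follows. The step needing the most care, and the one I expect to be the genuine obstacle, is precisely this left/right bookkeeping: one must verify that the interior relevant to \emph{left} multiplication still has density one, which is exactly where the conjugation symmetry of $\Irred(\GG)$ is used.

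For the second assertion, let $\mathcal{U}$ denote the ring of closed, densely defined operators affiliated with $L^\infty(\GG)$, which is a finite von Neumann algebra since $\GG$ is of Kac type. I would extend the inclusion $\Pol(\GG)\hookrightarrow L^\infty(\GG)\subseteq\mathcal{U}$ by means of the universal property of $\Pol(\GG)Z^{-1}$ recalled above, for which it suffices to check that every nonzero $s\in\Pol(\GG)$ becomes invertible in $\mathcal{U}$. As $\Pol(\GG)$ is a domain, $s$ is not a left zero-divisor, so Theorem \ref{zd-thm} shows that $\lambda(s)$ has trivial kernel; writing the polar decomposition $\lambda(s)=v|\lambda(s)|$ with $v\in L^\infty(\GG)$, triviality of the kernel makes $v$ an isometry and hence --- $L^\infty(\GG)$ being finite --- a unitary, while $|\lambda(s)|$ is injective and positive, so that $\lambda(s)$ is invertible in $\mathcal{U}$. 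The universal property then yields a ring homomorphism $\Phi\colon\Pol(\GG)Z^{-1}\to\mathcal{U}$ restricting to the inclusion on $\Pol(\GG)$, and $\Phi$ is automatically injective because its source is a skew field and $\Phi(1)=1\neq 0$. This produces the desired embedding.
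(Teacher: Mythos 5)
Your proof is correct and follows essentially the same route as the paper: the Ore condition comes from the same dimension count on the map $(x,y)\mapsto ax\mp sy$ from $W_{G}\oplus W_{G}$ into $W_F$ for a F{\o}lner set $F$ (your contradiction via $a\Pol(\GG)\cap s\Pol(\GG)=\{0\}$ is just the contrapositive of the paper's direct construction of $(t,b)\in\ker\alpha$ with $t\neq 0$), and the embedding comes from the universal property of the localization once every nonzero element of $\Pol(\GG)$ becomes invertible among the affiliated operators. Two of your refinements are worth recording. First, the paper uses $\int_S(F)$ as defined via $\supp(u\tenrep v)$ for $v\in S$ and simply asserts that left multiplication by $a$ and $s$ maps $W_{\int_S F}$ into $W_F$; your observation that left multiplication really calls for the ``left interior'' $\{u\in F\mid\forall v\in S:\supp(v\tenrep u)\subseteq F\}$, together with transporting the F{\o}lner estimate through the conjugation $u\mapsto\bar u$ (using $n_{\bar u}=n_u$ and $\supp(v\tenrep u)=\overline{\supp(\bar u\tenrep\bar v)}$), tidies up a point the paper glosses over; one could equivalently enlarge $S$ to $S\cup\bar S$. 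Second, for the embedding the paper cites Reich's identification of the affiliated operators with the Ore localization of $L^\infty(\GG)$ at its regular elements and invokes Corollary \ref{regular-cor}, whereas you verify invertibility directly via Theorem \ref{zd-thm} and the polar decomposition in a finite von Neumann algebra; both are valid, and yours is the more self-contained of the two.
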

The proof is an extension of an old argument due to Tamari \cite{tamari}.
\begin{proof}
Given $a\in \Pol(\GG)$ and $s\in Z$ we must find $b\in \Pol(\GG)$ and $t\in Z$ such that $at=sb$. Denote by $S\subseteq \Irred(\GG)$ the union $\supp(a)\cup\supp(s)$ and choose, according to the F{\o}lner condition, a finite subset $F\subseteq \Irred(\GG)$ such that $|\del_S F|<\frac12 |F|$. We then have
\[
|\del_SF|<\tfrac12|F|=\tfrac12|\int_S F| + \tfrac12|\del_S F|
\]
such that $|\del_S F|<|\int_S F|$. Consider the linear map 
\[
W_{\int_S F}\oplus W_{\int_S F}\ni(x,y)\overset{\alpha}{\longmapsto} ax - sy \in  W_{F},
\]
and note that
\begin{align*}
\dim_\CC W_{F} &= \dim_\CC W_{\int_S F} +\dim_\CC W_{\del_S F}\\
&=|\int_S F|+|\del_S F|\\
&<|\int_SF|+|\int_SF|\\
&=\dim_\CC(W_{\int_S F}\oplus W_{\int_S F}).
\end{align*}
We may therefore choose a non-trivial element $(t,b)\in \ker(\alpha)$. Note that this pair will solve the desired equation. We therefore just have to prove that $t$ is non-zero. But if $t=0$ then $0=at=sb$ and since $s\neq 0$ and $\Pol(\GG)$ is a domain this forces $b=0$, contradicting the choice of $(t,b)$. Thus, $\Pol(\GG)$ satisfies the Ore condition with respect to $Z$. Also $L^\infty(\GG)$ is an Ore ring with respect to its regular elements and its Ore localization identifies with its algebra of affiliated operators \cite{reich01}. From Corollary \ref{regular-cor} it follows that each non-zero element in $\Pol(\GG)$ stays regular in $L^\infty(\GG)$ and hence becomes invertible considered as an affiliated operator. By universality of the Ore localization we get an embedding of the skew field $\Pol(\GG)Z^{-1}$ into the algebra of affiliated operators.

\end{proof}

\begin{rem}
Note that the preceding theorem clarifies the possibility of extending the Atiyah conjecture to a context of Kac algebras. Indeed, under suitable assumptions regarding the torsion in the Kac algebra, one can conjecture that there exists an embedding of the Kac algebra into a skew field inside the algebra of affiliated operators. We show that in the coamenable case, as for group rings of amenable groups, being a domain already implies the existence of such a skew field inside the algebra of affiliated operators.
\end{rem}

\section{An approximation result}

Let $\Gamma$ be a discrete group and let $(\Gamma_i)_{i\in I}$ be an inverse family of normal subgroups directed by inclusion. Denote by $\pi_i$ the canonical surjection $\Gamma\to \Gamma/\Gamma_i$ as well as its natural extensions $\CC[\Gamma]\to \CC[\Gamma/\Gamma_i]$ and $\MM_n(\CC[\Gamma])\to\MM_n(\CC[\Gamma/\Gamma_i])$. Each matrix $T\in \MM_n(\CC[\Gamma])$ gives, by right multiplication, rise to a $\Gamma$-equivariant operator $R_T\colon \CC[\Gamma]^n\to\CC[\Gamma]^n$ 
which extends to an $\L(\Gamma)$-equivariant operator $R_T^{(2)}\in B(\ell^2(\Gamma)^n)$. We denote the matrix $\pi_i(T)\in \MM_n(\CC[\Gamma/\Gamma_i])$ by $T_i$ for notational convenience. A standing approximation conjecture, due to L{\"u}ck \cite{Luck02}, in the theory of $L^2$-invariants states that if $\cap_{i\in I} \Gamma_i=\{e\}$ then 
\[
\dim_{\L(\Gamma)}\ker(R_T^{(2)})=\lim_{i}\dim_{\L(\Gamma/\Gamma_i)}\ker(R_{T_i}^{(2)}).
\]
The approximation conjecture has been verified for matrices with entries in the integral group ring by L\"uck. Unfortunately, there exists no analogue for the \emph{integral} group ring in the theory of Kac algebras. However, for matrices with entries in the complex group ring the conjecture has also been verified in many cases; for torsion free, elementary amenable groups it was verified by Dodziuk, Linnell, Mathai, Schick and Yates in \cite{dodziuk-et-al}, and later Elek verified it for all amenable groups in \cite{elek-approximation-conj}. We now wish to consider a quantum analogue of this result. For this, let $\GG$ be a compact quantum group of Kac type and consider a projective system of compact quantum subgroups $(\GG_i)_{i\in I}$. That is; for each $i,j\in I$ with $j\geq i$ we have surjective $*$-homomorphisms $\pi_i\colon C(\GG)_\max\to C(\GG_i)_\max$ and $\pi_{ij}\colon C(\GG_j)_\max\to C(\GG_i)_\max$ that are compatible with the comultiplications and make the following diagram commutative.
\[
\xymatrix{& C(\GG)_\max \ar@{->>}[dl]_{\pi_j} \ar@{->>}[dr]^{\pi_i} &  \\
C(\GG_j)_\max \ar@{->>}[rr]_{\pi_{ij}}  & & C(\GG_i)_\max }
\]
Each corepresentation $u\in \MM_n(C(\GG))$ gives rise to a corepresentation $\pi_i(u)=(\pi_i(u_{kl}))_{kl}\in \MM_n(C(\GG_i))$ which need not be irreducible even when $u$ is so. But in case $\pi_i(u)$ is irreducible for some $i\in I$ it has to stay irreducible; i.e.~if $j\geq i$ then $\pi_j(u)$ is also irreducible. As in the group case, each matrix $T\in \MM_n(\Pol(\GG))$ gives rise to an $L^\infty(\GG)$-invariant operator $R_T^{(2)}\in B(L^2(\GG))$ and we denote by $T_i$ the matrix $\pi_i(T)\in \MM_n(\Pol(\GG_i))$. 

\begin{defi}\label{loc-inj}
The induced map on the level of fusion algebras $\pi_i\colon R(\GG)\to R(\GG_i)$  is called injective on $F\subseteq \Irred(\GG)$ (or just $F$-injective) if $\pi_i$ maps $F$ injectively into $\Irred(\GG_i)$. The family $(\pi_i)_{i\in I}$ is called locally injective if for all finite $F\subseteq \Irred(\GG)$ there exists $i\in I$ such that $\pi_i$ is $F$-injective.

\end{defi}

We remark that neither the injectivity nor the fact that $\pi_i(F)\subseteq \Irred(\GG_i)$ is  automatic. Also note that if $\pi_i$ is injective on $F$ for some $i$ then $\pi_{j}$ is injective on $F$ whenever $j\geq i$. Another consequence of local injectivity is that the Haar state of $\GG$ can be approximated; more precisely the following holds.

\begin{prop}
The family $(\pi_i)_{i\in I}$ is locally injective if and only if  $h_\GG(a)=\lim_i h_{\GG_i}(\pi_i(a))$ for any $a\in \Pol(\GG)$.
\end{prop}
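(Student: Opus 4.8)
The plan is to prove the biconditional by exploiting the explicit formula for the Haar state in terms of the fusion algebra structure. Recall that for $a \in \Pol(\GG)$ the Haar state picks out only the coefficient of the trivial corepresentation: writing $a = \sum_{\alpha, i, j} t_{ij}^\alpha u_{ij}^\alpha$, orthogonality of matrix coefficients gives $h_\GG(a) = t^{\epsilon}$, the scalar coefficient attached to the trivial (one-dimensional) corepresentation $\epsilon \in \Irred(\GG)$, and zero if $\epsilon \notin \supp(a)$. The same formula holds downstairs in each $\GG_i$. Thus the proposition is really a statement about whether $\pi_i$ preserves the coefficient of the trivial representation in the limit.

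For the forward direction I would argue as follows. Fix $a \in \Pol(\GG)$ and let $F = \supp(a)$, a finite subset of $\Irred(\GG)$ necessarily containing $\epsilon$. By local injectivity choose $i$ (hence all $j \geq i$) with $\pi_i$ injective on $F$ and $\pi_i(F) \subseteq \Irred(\GG_i)$. The key point is that $\pi_i$ sends the trivial corepresentation of $\GG$ to the trivial corepresentation of $\GG_i$, and $F$-injectivity guarantees that no other element of $F$ collapses onto the trivial class of $\GG_i$. Consequently, applying $\pi_i$ coefficient-wise to the expansion of $a$ preserves the $\epsilon$-coefficient exactly: $h_{\GG_i}(\pi_i(a)) = t^\epsilon = h_\GG(a)$ for all $j \geq i$. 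So the net is eventually constant and converges to $h_\GG(a)$. The main obstacle here is the bookkeeping verifying that under $F$-injectivity distinct irreducibles in $\supp(a)$ cannot both map to the trivial class of $\GG_i$ — but since the trivial class is a single element of $\Irred(\GG)$ and injectivity is assumed on all of $F$, at most one element of $F$ maps to it, and that must be $\epsilon$ itself.

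For the converse I would argue contrapositively: suppose $(\pi_i)_{i \in I}$ is not locally injective, producing a finite $F \subseteq \Irred(\GG)$ on which no $\pi_i$ is injective. I want to manufacture a single $a \in \Pol(\GG)$ whose Haar state is not approximated. The idea is to exploit that for each $i$ there are distinct $u, v \in F$ with $\pi_i(u) = \pi_i(v)$ (or some element of $F$ mapping outside $\Irred(\GG_i)$); by passing to a cofinal subnet one can locate a fixed pair, or more carefully, use the observation that once $\pi_i$ is non-injective on $F$ it stays non-injective for all larger indices. I would then choose $a$ to be a suitable linear combination of matrix coefficients from $F$ — concretely, take a difference $u_{ij} - v_{kl}$ of coefficients from two classes that get identified, normalized so that its image under $\pi_i$ has a nonvanishing $\epsilon$-coefficient while $a$ itself has none. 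The collapsing of $u$ and $v$ under $\pi_i$ creates a coincidence among matrix coefficients downstairs that can produce a trivial-representation component not present upstairs, breaking the equality $h_\GG(a) = \lim_i h_{\GG_i}(\pi_i(a))$.

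I expect the converse to be the genuinely delicate part. The difficulty is twofold: first, ensuring that the failure of injectivity on $F$ can be leveraged to produce nonzero $\epsilon$-coefficient downstairs, which requires understanding how a collapse $\pi_i(u) = \pi_i(v)$ interacts with the fusion product and conjugation — recall that the trivial corepresentation appears in $u \tenrep \bar{u}$, so identifying two classes can create new occurrences of $\epsilon$ in products of their matrix coefficients. Second, the net is only directed, not sequential, so I must be careful to produce \emph{one} element $a$ that works for a cofinal set of indices rather than a different $a$ for each $i$; the monotonicity remark (non-injectivity on $F$ persists upward) is exactly what makes this possible, since it lets me fix the offending pair along a cofinal subnet and thereby construct $a$ once and for all.
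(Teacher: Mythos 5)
Your forward direction is essentially the paper's argument: pick $i$ with $\pi_i$ injective on $\supp(a)$, note that the images $\pi_i(u^\alpha)$ are then inequivalent irreducibles, so their matrix coefficients remain linearly independent (indeed orthogonal) and the coefficient of the trivial corepresentation is preserved. That half is fine.

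The converse has a genuine gap. Your proposed test element $a=u_{ij}-v_{kl}$ cannot detect a collapse $\pi_i(u)\cong\pi_i(v)$: the Haar state vanishes on every matrix coefficient of every \emph{nontrivial irreducible} corepresentation, upstairs and downstairs alike, so if $\pi_i(u)$ and $\pi_i(v)$ become equivalent but remain nontrivial irreducibles you get $h_{\GG_i}(\pi_i(a))=0=h_\GG(a)$ and no contradiction. More generally, no element supported on $F$ itself will do; the failure of injectivity is only visible through the fusion products. The paper's converse (run directly, not contrapositively) uses exactly the device you gesture at but do not carry out: take $a=\chi(u\tenrep\bar u)$, whose Haar state equals $\dim_\CC\Mor(e,\pi_i(u)\tenrep\overline{\pi_i(u)})$ downstairs, hence equals $1$ eventually, forcing $\pi_i(u)$ to be irreducible eventually; and take $a=\chi(u\tenrep\bar v)$ for distinct $u,v\in F$, whose Haar state is $0$ upstairs and $\dim_\CC\Mor(\pi_i(v),\pi_i(u))$ downstairs, forcing eventual inequivalence. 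Since $F$ is finite, finitely many such conditions hold simultaneously for large $i$, which is local injectivity. (Separately, your monotonicity claim is reversed: it is \emph{injectivity} on $F$ that persists to larger indices, not non-injectivity; but this is harmless since the negation of local injectivity already gives you non-injectivity at every index.)
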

\begin{proof} 
Assume first that $(\pi_i)_{i\in I}$ is locally injective. If $a\in \Pol(\GG)$ has linear expansion $a=\sum_{\alpha,k,l} t_{kl}^{\alpha}u_{kl}^\alpha$ then $h(a)$ is just the coefficient of the trivial corepresentation. Now choose $i_0\in I$ such that $\pi_{i_0}$ is injective on $\supp(a)$; then for all $i\geq i_0$ we have that $\pi_i(a)=\sum_{\alpha,k,l}t_{kl}^{\alpha}\pi_i(u_{kl}^\alpha)$ and that
$
\{\pi_i(u_{kl}^\alpha)\mid u^\alpha\in \supp(a)\}
$
is a linearly independent set of vectors  in $\Pol(\GG_i)$. Hence $h_\GG(a)=h_{\GG_i}(\pi_i(a))$ for all $i\geq i_0$.  Assume conversely that the approximation property holds and consider some finite $F\subseteq \Irred(\GG)$. For an irreducible, unitary corepresentation $u$ we have that $h_\GG(\chi(u\tenrep \bar{u}))=1$ (where $\chi$ is the character map \cite{woronowicz-pseudo}) and hence
\begin{align*}
1 &=h_\GG(\chi(u\tenrep \bar{u}))\\
&=\lim_i h_{\GG_i}(\pi_i(\chi(u\tenrep \bar{u})))\\
&=\lim_i h_{\GG_i}(\chi(\pi_i(u)\tenrep \overline{\pi_i(u)}))\\
&=\lim_i\dim_{\CC}\Mor(e,\pi_i(u)\tenrep \overline{\pi_i(u)}).
\end{align*}
Therefore $\dim_{\CC}\Mor(e,\pi_i(u)\tenrep \overline{\pi_i(u)})=1$ eventually and hence there exists $i_1\in I$ such that $\pi_i(F)\subseteq \Irred(\GG_i)$ for $i\geq i_1$. Similarly, if $u,v\in F$ are different we have $h_\GG(\chi(u\tenrep \bar{v}))=0$ and as above the approximation property implies  that $h_{\GG_i}(\chi(\pi_i(u)\tenrep \overline{\pi_i(v)}))=0$ eventually. Thus, there exists $i_2\in I$ such that $\pi_i(F)$ consists of inequivalent corepresentations when $i\geq i_2$. So, if $i\geq \max\{i_1,i_2\}$ we have that $\pi_i(F)$ consists of inequivalent, irreducible corepresentations as desired.

\end{proof}

Note that the proof actually gives the stronger statement that $h_{\GG_i}(\pi_i(a))=h_\GG(a)$ eventually. The aim of this section is to prove the following.

\begin{thm}\label{quantum-pape}
If $\GG$ is coamenable and of Kac type and if the family $(\pi_i)_{i\in I}$ is locally injective then
\[
\dim_{L^\infty(\GG)}\ker(R_T^{(2)})=\lim_i\dim_{L^\infty(\GG_i)}\ker(R_{T_i})
\] 
for each $T\in \MM_n(\Pol(\GG))$.
\end{thm}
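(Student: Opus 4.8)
The plan is to isolate a single finite-dimensional estimate, valid for every compact quantum group of Kac type, and then to transport it from $\GG$ to the subgroups $\GG_i$ using local injectivity. Write $S:=\bigcup_{p,q}\supp(T_{pq})\subseteq\Irred(\GG)$ for the support of the matrix $T$. Exactly as in the proof of $(\dagger)$ behind Theorem \ref{zd-thm}, for any finite $F\subseteq\Irred(\GG)$ right multiplication by $T$ restricts to a linear map $R_{T,F}\colon W_{\int_S F}^n\to W_F^n$ (a product $u_{ij}v_{kl}$ of matrix coefficients lies in the span of the coefficients of $u\tenrep v$), and I claim the two-sided bound
\[
0\leq \dim_{L^\infty(\GG)}\ker(R_T^{(2)})-|F|^{-1}\dim_\CC\ker(R_{T,F})\leq n\,\frac{|\del_S F|}{|F|}.
\]
Indeed, $\ker R_T^{(2)}$ and $\overline{\rg R_T^{(2)}}$ are $L^\infty(\GG)$-invariant, so Proposition \ref{dim-prop} gives $\dim_F(\ker R_{T,F})\leq \dim_{L^\infty(\GG)}\ker R_T^{(2)}$ and $\dim_F(\rg R_{T,F})\leq\dim_{L^\infty(\GG)}\overline{\rg R_T^{(2)}}$; combining this with the rank--nullity identity $\dim_\CC\ker R_{T,F}+\dim_\CC\rg R_{T,F}=\dim_\CC W_{\int_S F}^n=n|\int_S F|$ and with $\dim_{L^\infty(\GG)}\ker R_T^{(2)}+\dim_{L^\infty(\GG)}\overline{\rg R_T^{(2)}}=n$ pushes both relative dimensions to their upper bounds up to the defect $n|\del_S F|/|F|$. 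Crucially, this estimate uses only the relative dimension function of Section \ref{reldim-section}, hence only the Kac property, and makes no reference to coamenability; the identical reasoning therefore applies to each pair $(\GG_i,T_i)$, provided $\GG_i$ is of Kac type.

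Fix $\varps>0$. Coamenability of $\GG$ furnishes, through the F{\o}lner condition, a finite $F\subseteq\Irred(\GG)$ with $|\del_S F|<\varps|F|$. The transfer step is to invoke local injectivity to obtain an index $i_0$ so that, for all $i\geq i_0$, the map $\pi_i$ is injective into $\Irred(\GG_i)$ on the finite set $\tilde F:=F\cup S\cup\bigcup_{u\in F,\,v\in S}\supp(u\tenrep v)$. For such $i$ the subgroup $\GG_i$ is again of Kac type, since involutivity of the antipode passes to the Hopf-algebra quotient $\pi_i$. Moreover $\pi_i$ sends the irreducible constituents of each $u\tenrep v$ with $u\in F$, $v\in S$ to \emph{distinct} irreducibles of $\GG_i$, so that $\supp(\pi_i(u)\tenrep\pi_i(v))=\pi_i(\supp(u\tenrep v))$. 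From this one reads off $S_i:=\supp(T_i)=\pi_i(S)$, the exact matching of interiors $\int_{S_i}(\pi_i F)=\pi_i(\int_S F)$, and hence $|\pi_i F|=|F|$ together with $|\del_{S_i}(\pi_i F)|\leq|\del_S F|<\varps|\pi_i F|$; in particular the push-forward $\pi_i(F)$ is an $\varps$-F{\o}lner set for $S_i$ in $\GG_i$, produced explicitly rather than by an abstract appeal to coamenability of $\GG_i$. Finally, since $\pi_i$ is an algebra homomorphism restricting to linear isomorphisms $W_{\int_S F}\to W_{\pi_i(\int_S F)}$ and $W_F\to W_{\pi_i F}$, it intertwines $R_{T,F}$ with $R_{T_i,\pi_i F}$ (the analogue of $R_{T,F}$ for $T_i$ and $\pi_i(F)$), whence $|F|^{-1}\dim_\CC\ker(R_{T,F})=|\pi_i F|^{-1}\dim_\CC\ker(R_{T_i,\pi_i F})$.

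Putting the pieces together, for every $i\geq i_0$ the key estimate on $\GG$ and on $\GG_i$, linked by the identity just established, yields
\[
\bigl|\dim_{L^\infty(\GG_i)}\ker R_{T_i}-\dim_{L^\infty(\GG)}\ker R_T^{(2)}\bigr|\leq 2n\varps.
\]
As $\varps>0$ was arbitrary and $I$ is directed, the theorem follows. The step I expect to be the main obstacle is the transfer of the second paragraph: one must pin down exactly which tensor-product supports have to be separated by $\pi_i$ so that local injectivity forces the boundary and interior operations---and therefore the truncated operators---to be carried faithfully from $\GG$ to $\GG_i$. The conceptual point that makes the argument go through is that the relative-dimension estimate is insensitive to coamenability, so that the F{\o}lner sets needed on the $\GG_i$ side arise for free as the images $\pi_i(F)$, and one never has to decide whether the subgroups $\GG_i$ are themselves coamenable.
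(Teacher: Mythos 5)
Your proposal is correct and follows essentially the same route as the paper: your ``key estimate'' is exactly the paper's Lemma~\ref{pape-lem}, your transfer step via injectivity on $\tilde F=F\cup S\cup\bigcup_{u\in F,v\in S}\supp(u\tenrep v)$ is the paper's Lemma~\ref{rand-lem} together with the commutative diagram intertwining $R_T^F$ and $R_{T_i}^{F_i}$, and the final triangle inequality giving the bound $2n\varps$ is identical. The point you flag as the likely obstacle --- which tensor-product supports must be separated --- is resolved in the paper exactly as you propose, and your observation that coamenability of the $\GG_i$ is never needed also matches the paper, which records that fact only in a side remark.
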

The verification of the approximation conjecture for amenable groups was recently simplified by Pape in \cite{pape} and the proof of Theorem \ref{quantum-pape} follows the outline of his proof.

\begin{rem}
Since $\GG$ is assumed  coamenable the same is true for each of the $\GG_i$'s. This follows by noting that the surjection $\pi_i\colon C(\GG)_\max\to C(\GG_i)_\max$ induces an injection $\iota_i\colon \ell^\infty(\hat{\GG}_i)\to \ell^\infty(\hat{\GG})$ respecting the coproducts. If $\GG$ is coamenable then $\hat{\GG}$ is amenable \cite{tomatsu-amenable} and $\ell^\infty(\hat{\GG})$ therefore allows an invariant mean which restricts to an invariant mean on $\ell^\infty(\hat{\GG}_i)$ proving that $\GG_i$ is coamenable. Similarly, since $\GG$ is assumed to be of Kac type each  $\GG_i$ is also of Kac type \cite[Lemma 2.9]{tomatsu-coideals}; this is of course needed in order for $\dim_{L^\infty(\GG_i)}(-)$ to make sense.
\end{rem}

Before the actual proof we will set up some notation and prove two small lemmas. Since the statement in Theorem \ref{quantum-pape} is trivial if $T=0$ we can assume $T\neq 0$. Each entry $T_{ij}$ has a unique linear expansion $T_{ij}=\sum_{\alpha,p,q} t_{\alpha,p,q}^{ij}u_{pq}^{\alpha}$ and  we now define the \emph{support of $T$} as the set
\[
S=\{u^\alpha\in \Irred(\GG)\mid \exists \ i,j,p,q : t_{\alpha,p,q}^{ij}\neq 0\},
\]
which is non-empty since $T$ is assumed non-zero. For a finite subset $F\subseteq \Irred(\GG)$ we consider, as in the previous section,  the subspace
\[
W_F^n=\spann_\CC\{ u_{ij}^\alpha\mid u^\alpha \in {F}\}^n\subseteq L^2(\GG)^n,
\]
and we see that $R_T^{(2)}$ restricts to an operator $R_T^{F}\colon W_{{\int_S(F)}}^n\To W_F^n$. 
The following lemma is a quantum group analogue of \cite[Lemma 1]{pape}.
\begin{lem}\label{pape-lem}
For each finite $F\subseteq \Irred(\GG)$ we have
\[
0\leq \dim_{L^\infty(\GG)}\ker(R_T^{(2)})-\dim_F\ker(R_T^F)\leq n \frac{|\del_SF|}{|F|}.
\]
\end{lem}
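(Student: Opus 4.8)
The plan is to bound the difference $\dim_{L^\infty(\GG)}\ker(R_T^{(2)}) - \dim_F\ker(R_T^F)$ by relating the relative dimensions of the kernels and ranges of the restricted operator $R_T^F$ to their global von Neumann counterparts, mimicking the argument given for Theorem \ref{zd-thm}. First I would observe that since $R_T^{(2)}\in \MM_n(L^\infty(\GG))$ commutes with the right $L^\infty(\GG)$-action, both $\ker(R_T^{(2)})$ and $\overline{\rg(R_T^{(2)})}$ are closed $L^\infty(\GG)$-invariant subspaces of $L^2(\GG)^n$, so their relative dimensions agree with their Murray--von Neumann dimensions by Proposition \ref{dim-prop}(iii). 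The restricted operator $R_T^F\colon W_{\int_S F}^n\to W_F^n$ has $\ker(R_T^F)\subseteq W_{\int_S F}^n$ and $\rg(R_T^F)\subseteq W_F^n$, so Proposition \ref{dim-prop}(iv) applies and gives $\dim_F\ker(R_T^F)=|F|^{-1}\dim_\CC\ker(R_T^F)$ and likewise for the range.

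The key inequalities come from monotonicity (Proposition \ref{dim-prop}(ii)). Since $\ker(R_T^F)=\ker(R_T^{(2)})\cap W_{\int_S F}^n\subseteq \ker(R_T^{(2)})$, monotonicity gives
\[
\dim_F\ker(R_T^F)\leq \dim_F\ker(R_T^{(2)})=\dim_{L^\infty(\GG)}\ker(R_T^{(2)}),
\]
which establishes the lower bound $0\leq \dim_{L^\infty(\GG)}\ker(R_T^{(2)})-\dim_F\ker(R_T^F)$. Similarly $\rg(R_T^F)\subseteq \overline{\rg(R_T^{(2)})}$ forces
\[
\dim_F\rg(R_T^F)\leq \dim_{L^\infty(\GG)}\overline{\rg(R_T^{(2)})}.
\]
Adding these two bounds and using additivity of the Murray--von Neumann dimension together with the rank--nullity identity $\dim_\CC\ker(R_T^F)+\dim_\CC\rg(R_T^F)=\dim_\CC W_{\int_S F}^n = n|\int_S F|$, I obtain
\[
\frac{n|\int_S F|}{|F|}\leq \dim_{L^\infty(\GG)}\ker(R_T^{(2)})+\dim_{L^\infty(\GG)}\overline{\rg(R_T^{(2)})}=n,
\]
where the last equality uses $\dim_{L^\infty(\GG)}L^2(\GG)^n=n$.

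To extract the upper bound I would rearrange: the displayed inequality rewrites as
\[
\dim_{L^\infty(\GG)}\ker(R_T^{(2)})-\frac{\dim_\CC\ker(R_T^F)}{|F|}\leq n-\frac{n|\int_S F|}{|F|}=n\,\frac{|F|-|\int_S F|}{|F|}=n\,\frac{|\del_S F|}{|F|},
\]
using $|F|=|\int_S F|+|\del_S F|$, which is exactly the desired estimate. The main obstacle, and the point requiring care, is verifying that $R_T^F$ is genuinely the compression obtained by restricting $R_T^{(2)}$ to $W_{\int_S F}^n$ with image landing in $W_F^n$ — this rests on the fusion-algebra fact, already noted in the text, that products of matrix coefficients $u_{ij}v_{kl}$ lie in the span of the matrix coefficients of $u\tenrep v$, so that $R_T^{(2)}$ carries $W_{\int_S F}^n$ into $W_F^n$ by the definition of $\int_S(F)$, and that $\ker(R_T^F)$ really is the intersection $\ker(R_T^{(2)})\cap W_{\int_S F}^n$ rather than something smaller. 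Everything else is bookkeeping with the three dimension-function properties.
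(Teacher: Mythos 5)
Your argument is correct and follows essentially the same route as the paper's proof: the lower bound from $\ker(R_T^F)\subseteq\ker(R_T^{(2)})$ together with Proposition \ref{dim-prop}(ii)--(iii), and the upper bound from rank--nullity on $W_{\int_S F}^n$ via Proposition \ref{dim-prop}(iv), the estimate $\dim_F\rg(R_T^F)\leq\dim_{L^\infty(\GG)}\overline{\rg(R_T^{(2)})}$, and additivity of the Murray--von Neumann dimension. The only difference is cosmetic (you add the two inequalities and rearrange, the paper substitutes directly), and your closing remark is right that only the inclusion $\ker(R_T^F)\subseteq\ker(R_T^{(2)})$ is needed, not the exact identification of $\ker(R_T^F)$ with an intersection.
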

We remind the reader that $|F|$ is defined as $\hat{h}_\GG(P_F)=\sum_{u\in F}n_u^2$. 
\begin{proof}
The first inequality follows from the inclusion $\ker(R_T^F)\subseteq \ker(R_T^{(2)})$. To prove the second inequality, we first note that Proposition \ref{dim-prop} together with the dimension theorem from linear algebra implies that
\begin{align*}
\dim_F\ker(R_T^F)+\dim_F \rg(R_T^F) &= |F|^{-1}\Big{(}\dim_\CC \ker(R_T^F)+\dim_\CC\rg(R_T^F)\Big{)}\\
&=|F|^{-1}\dim_\CC{W_{\int_S(F)}^n}\\
&=|F|^{-1}n(|F|-|\del_SF|)\\
&= n-n|F|^{-1} |\del_SF|. 
\end{align*}
Now note that $\overline{\rg(R_T^{(2)})}$ is $L^\infty(\GG)$-invariant so that
\[
\dim_{L^\infty(\GG)}\overline{\rg(R_T^{(2)})}=\dim_F \overline{\rg(R_T^{(2)})}\geq \dim_F \rg(R_T^F).
\]
Using this and the additivity of $\dim_{L^\infty(\GG)}(-)$ we get the desired inequality:
\begin{align*}
\dim_{L^\infty(\GG)}\ker(R_T^{(2)})-\dim_F\ker(R_T^F) &= n-\dim_{L^\infty(\GG)}\overline{\rg(R_T^{(2)})}-\dim_F\ker(R_T^F)\\
&\leq n-\dim_F\rg(R_T^F)-\dim_F\ker(R_T^F)\\
&=n-\Big{(}n-n|F|^{-1} |\del_SF|\Big{)}\\
&= n|F|^{-1} |\del_SF|.
\end{align*}
\end{proof}
For a finite set $F\subseteq \Irred(\GG)$ the set $\pi_i(F)\subseteq \ZZ[\Irred(\GG_i)]$ will be denoted by $F_i$ in the following. We remark that if $\pi_i$ is injective on $F$ then we have $F_i\subseteq \Irred(\GG_i)$.

\begin{lem}\label{rand-lem}
Let $F\subseteq \Irred(\GG)$ be a finite set and assume that $\pi_i\colon R(\GG)\to R(\GG_i)$ is locally injective (see Definition \ref{loc-inj}) on the set
\[
\Omega= F\cup S\cup \bigcup_{x\in F,s\in S}\supp(x\tenrep s)
\]
for some $i\in I$. Then $S_i$ is the support of $T_i$ and $\pi_i(\del_SF)=\del_{S_i}(F_i)$. Moreover, for any $E\subseteq \Omega$ we have $\hat{h}_{\GG_i}(P_{E_i})=\hat{h}_{\GG}(P_E)$.

\end{lem}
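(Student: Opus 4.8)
The plan is to treat the three assertions in increasing order of difficulty, using throughout that $\Omega$-injectivity of $\pi_i$ means, by Definition \ref{loc-inj}, both that $\pi_i(\Omega)\subseteq\Irred(\GG_i)$ and that $\pi_i$ is injective on $\Omega$. The starting observation is that for any irreducible $u\in\Omega$ the image $\pi_i(u)=(\pi_i(u_{pq}))_{pq}$ is an irreducible corepresentation of the same matrix size, so $n_{\pi_i(u)}=n_u$. This immediately settles the last claim: for $E\subseteq\Omega$ injectivity gives a bijection $E\to E_i$, $u\mapsto\pi_i(u)$, preserving dimensions, whence $\hat{h}_{\GG_i}(P_{E_i})=|E_i|=\sum_{u\in E}n_{\pi_i(u)}^2=\sum_{u\in E}n_u^2=|E|=\hat{h}_\GG(P_E)$.

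For the support of $T_i$ I would expand each entry as $\pi_i(T_{kl})=\sum_{\alpha,p,q}t^{kl}_{\alpha,p,q}\pi_i(u^\alpha_{pq})$, where only indices $\alpha$ with $u^\alpha\in S$ contribute. Since $S\subseteq\Omega$, the classes $\pi_i(u^\alpha)$ for $u^\alpha\in S$ are pairwise inequivalent irreducibles, so by the orthogonality relations for matrix coefficients in $\Pol(\GG_i)$ the set $\{\pi_i(u^\alpha_{pq})\mid u^\alpha\in S,\ 1\le p,q\le n_\alpha\}$ is linearly independent. Hence no cancellation can occur: a coefficient $t^{kl}_{\alpha,p,q}$ is nonzero in the expansion of $\pi_i(T_{kl})$ exactly when it was nonzero in that of $T_{kl}$, and so the support of $T_i$ equals $\{\pi_i(u^\alpha)\mid u^\alpha\in S\}=S_i$.

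The main work is the boundary identity, and the key input is the compatibility of $\pi_i$ with the fusion product. Because $\pi_i$ is a morphism of quantum groups it induces a tensor functor on corepresentations, hence a homomorphism of fusion algebras $\pi_i\colon R(\GG)\to R(\GG_i)$ with $\pi_i(x\tenrep s)=\pi_i(x)\tenrep\pi_i(s)$. For $x\in F$ and $s\in S$ every irreducible occurring in $\supp(x\tenrep s)$ lies in $\Omega$ by construction of $\Omega$; writing $x\tenrep s=\sum_{w\in\supp(x\tenrep s)}m_w\,w$ and applying $\pi_i$, the images $\pi_i(w)$ are \emph{distinct} irreducibles (injectivity on $\Omega$) with positive multiplicities, so
\[
\supp\bigl(\pi_i(x)\tenrep\pi_i(s)\bigr)=\pi_i\bigl(\supp(x\tenrep s)\bigr).
\]
This is the step I expect to be the crux: it is precisely here that local injectivity on the \emph{whole} of $\Omega$, and not merely on $F\cup S$, is required, so as to prevent the irreducibles inside a product from either colliding with one another or becoming reducible.

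With this identity in hand I would finish by proving $\pi_i(\int_S F)=\int_{S_i}(F_i)$. Fix $u\in F$; since $S_i=\pi_i(S)$ via the injective correspondence $s\mapsto\pi_i(s)$, the condition for $\pi_i(u)\in\int_{S_i}(F_i)$ reads ``$\pi_i(\supp(u\tenrep s))\subseteq\pi_i(F)$ for all $s\in S$''. As $\supp(u\tenrep s)\subseteq\Omega$ and $F\subseteq\Omega$ with $\pi_i$ injective on $\Omega$, the inclusion $\pi_i(\supp(u\tenrep s))\subseteq\pi_i(F)$ reflects back to $\supp(u\tenrep s)\subseteq F$, so $\pi_i(u)\in\int_{S_i}(F_i)$ if and only if $u\in\int_S F$. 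Taking complements inside $F$ respectively $F_i$ and using injectivity of $\pi_i$ on $F$ then yields $\del_{S_i}(F_i)=F_i\setminus\int_{S_i}(F_i)=\pi_i(F\setminus\int_S F)=\pi_i(\del_S F)$, which completes the argument.
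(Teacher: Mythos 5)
Your proposal is correct and follows essentially the same route as the paper: the Haar-weight equality via preservation of matrix sizes under an injective correspondence, the support statement via linear independence of matrix coefficients of inequivalent irreducibles, and the boundary identity via the key compatibility $\pi_i(\supp(x\tenrep s))=\supp(\pi_i(x)\tenrep\pi_i(s))$, which the paper also isolates as the central consequence of $\Omega$-injectivity. The only difference is that you supply the details (in particular the reflection argument showing $\pi_i(\int_S F)=\int_{S_i}(F_i)$) that the paper leaves implicit.
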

\begin{proof}
The equality $S_i=\supp(T_i)$ follows directly from the injectivity of $\pi_i$ on $S$. The injectivity on $\Omega$ also implies that 
\[
\forall x\in F \ \forall s\in S: \pi_i(\supp(x\tenrep s))=\supp(\pi_i(x)\tenrep \pi_i(s)).
\]
From this the equality $\pi_i(\del_SF)=\del_{S_i}F_i$ follows. Since $\hat{h}_\GG(1_u)=n_u^2$ for any $u\in \Irred(\GG)$ the $\Omega$-injectivity gives
\[
\hat{h}_{\GG}(P_E)=\sum_{u\in E}n_u^2=\sum_{u\in E_i}n_u^{2}=\hat{h}_{\GG_i}(P_{E_i}).
\]

\end{proof}
We now give the proof of Theorem \ref{quantum-pape}.
\begin{proof}[Proof of Theorem \ref{quantum-pape}.]
For given $\varps>0$ the F{\o}lner condition provides an $F\subseteq \Irred(\GG)$ such that
\[
|\del_S(F)|<\frac{\varps}{2n}|F|,
\]
where $S$ is the support of $T$ as defined earlier. By assumption we can find $i_0\in I$ such that  $\pi_i$ is locally injective on the set
\[
\Omega=F\cup S \cup \bigcup_{x\in F, s\in S}\supp(x\tenrep s)
\] 
whenever $i\geq i_0$. In particular $\{\pi_i(\sqrt{n_\alpha}u_{kl}^\alpha)\mid 1\leq k,l\leq  n_\alpha, u^\alpha\in \Omega\}$ is an orthonormal set of vectors in $L^2(\GG_i)$. From Lemma \ref{rand-lem} we therefore obtain a commutative diagram
\[
\xymatrix{
W_{\int_SF}^n \ar[rr]_{\sim}^{\pi_i^{(2)}} \ar[dd]_{R_T^F}  & & W_{\int_{S_i}F_i}^n\ar[dd]^{R_{T_i}^{F_i}} \\
 &              \\
W_F^n\ar[rr]^{\sim}_{\pi_i^{(2)}}  &&  W_{F_i}^n  }
\]
where the horizontal arrows are just given by $\pi_i$, but now considered as a unitary map of finite dimensional Hilbert spaces. From this we get $\dim_F(\ker(R_T^F))=\dim_{F_i}(\ker(R_{T_i}^{F_i}))$ and, using Lemma \ref{pape-lem}, we now get
\begin{align*}
& \hspace{0.56cm}|\dim_{L^\infty(\GG)}\ker(R^{(2)}_T)-\dim_{L^\infty(\GG_i)}\ker(R_{T_i}^{(2)})|\\
&\leq |\dim_{L^\infty(\GG)}\ker(R_T^{(2)})-\dim_F\ker(R_T^F)| + |\dim_F\ker(R_T^{F})-\dim_{L^\infty(\GG_i)}\ker(R_{T_i}^{(2)})|\\ 
&\leq n \frac{|\del_SF|}{|F|} + |\dim_{F_i}\ker(R_{T_i}^{F_i})-\dim_{L^\infty(\GG_i)}\ker(R_{T_i}^{(2)})|\\
&\leq n \frac{|\del_SF|}{|F|} + n \frac{\hat{h}_{\GG_i}(P_{\del_{S_i}F_i})}{\hat{h}_{\GG_i}(P_{F_i})}\\
&= 2n \frac{|\del_SF|}{|F|}.
\end{align*}
Therefore $|\dim_{L^\infty(\GG)}\ker(R^{(2)}_T)-\dim_{L^\infty(\GG_i)}\ker(R_{T_i}^{(2)})|<\varps$ whenever $i\geq i_0$ and the desired convergence follows.
\end{proof}

%\bibliographystyle{acm}
%\bibliography{ny-lit}

\def\cprime{$'$}

\end{document}